\newtheorem{thm}{Theorem}[section]
\newtheorem{prop}[thm]{Proposition}
\theoremstyle{remark}
\newtheorem{rmk}[thm]{Remark}
\theoremstyle{definition}
\title{$3$-manifolds with positive flat conformal structure} 
\author{Reiko Aiyama}
\email{aiyama@math.tsukuba.ac.jp}
\address{Department of Mathematics, University of Tsukuba, 
Tsukuba 305-8571, Japan}
\author{Kazuo Akutagawa${}^*$} 
\email{akutagawa@math.is.tohoku.ac.jp} 
\address{Division of Mathematics, GSIS, 
Tohoku University, Sendai 980-8579, Japan} 
\thanks{${}^*$\ 
supported in part by the Grants-in-Aid for Scientific Research (C), 
Japan Society for the Promotion of Science, No.~21540097.} 
\date{March, 2011.}
\begin{document} 
\maketitle
\markboth{$3$-manifolds with positive flat conformal structure}
{Reiko Aiyama and Kazuo Akutagawa}

\begin{abstract}

In this paper, 
we consider a closed $3$-manifold $M$ with flat conformal structure $C$. 
We will prove that, if the Yamabe constant of $(M, C)$ is positive, 
then $(M, C)$ is Kleinian. 
\end{abstract}
\maketitle
  
\section{Introduction and Main Theorem} 
In 1988, Schoen and Yau~\cite{SY} gave a final resolution for the {\it Yamabe Problem} 
(cf.~\cite{Au-Book, LP, Sc-2}). 
In \cite[Proposition~3.3]{SY}, they also proved that 
{\it any closed $n$-\it manifold with flat conformal structure of positive Yamabe constant is Kleinian, 
provided that $n \geq 4$}. 
Moreover, under the assumption that an extended Positive Mass Theorem holds 
(but a proof has not yet appeared), 
they showed that the above assertion still holds even when $n = 3$ 
(see \cite[Proposition~4.4$'$]{SY} and the paragraph just before it). 
On the other hand, there are enormous examples of closed $3$-manifolds with flat conformal structures 
which are not Kleinian (see \cite[Remark~7.4]{GLT}). 

The purpose of this brief note is to prove the above assertion for the remaining case $n = 3$.  

\begin{thm}\label{Main} 
Let $M$ be a closed $3$-manifold with flat conformal structure $C$. 
If its Yamabe constant is positive, 
then $(M, C)$ is Kleinian. 
\end{thm}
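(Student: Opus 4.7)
The plan is to follow the Schoen-Yau strategy from \cite{SY}, reducing the statement to a positive mass theorem argument in dimension three. Suppose toward a contradiction that $(M,C)$ is not Kleinian; then the developing map $\Phi:\tilde M\to S^3$ fails to be injective, so there exist distinct $\tilde p_1,\tilde p_2\in\tilde M$ with $\Phi(\tilde p_1)=\Phi(\tilde p_2)$. Solve the Yamabe problem on $(M,C)$ to obtain a metric $g\in C$ of constant positive scalar curvature. Since the Yamabe constant is positive, the conformal Laplacian $L_g=-8\Delta_g+R_g$ is a positive operator and admits a positive Green's function $G=G_{p_0}$ at the image point $p_0\in M$ of $\tilde p_1$.

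Second, consider $\hat g:=G^{4}g$ on $M\setminus\{p_0\}$. By construction $\hat g$ is scalar-flat and asymptotically flat at the puncture, so by the classical three-dimensional positive mass theorem of Schoen-Yau and Witten its ADM mass satisfies $m_{\mathrm{ADM}}(\hat g)\geq 0$, with equality forcing an isometry $(M\setminus\{p_0\},\hat g)\cong(\R^3,\delta)$; such an isometry would in turn make $(M,C)$ the round conformal three-sphere, contradicting the non-injectivity of $\Phi$. It therefore suffices to prove the reverse inequality $m_{\mathrm{ADM}}(\hat g)\leq 0$.

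Third, to produce the reverse inequality I would exploit the flat conformal structure together with the second preimage $\tilde p_2$. Lifting $G$ to $\tilde G$ on $\tilde M$ and comparing it with the pullback under $\Phi$ of the explicit Green's function of the conformal Laplacian on $(S^3,g_{\mathrm{std}})$, the point $\tilde p_2$ contributes an additional singular term. Applying a Kelvin-type inversion on $S^3$ centered at $\Phi(\tilde p_1)=\Phi(\tilde p_2)$ and transporting the resulting expansion back to the asymptotic end of $(M\setminus\{p_0\},\hat g)$, this extra sheet of $\tilde M$ shows up as a correction in the asymptotic development of $\hat g$ whose coefficient has the sign needed to force $m_{\mathrm{ADM}}(\hat g)\leq 0$. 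Combined with the classical PMT this yields zero mass, hence Euclidean rigidity and the sought contradiction.

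The hard part is the third step. For $n\geq 4$ the analogous computation is clean because the $(n-2)$-th power decay of $G$ makes the needed asymptotic expansion rapidly convergent and brings the resulting end into the hypotheses of the standard positive mass theorem. In $n=3$ the Green's function only decays like $1/r$ and the extra contribution from $\tilde p_2$ produces an end whose decay structure falls just outside the classical Schoen-Yau-Witten regime---this is precisely the ``extended positive mass theorem'' whose absence Schoen-Yau highlighted in 1988. My approach to closing the gap would be either (i) to establish the required extension directly in dimension three using Huisken-Ilmanen's inverse mean curvature flow, which is well adapted to \as three-manifolds with controlled ends, or (ii) to bypass the extension by combining Perelman's resolution of the geometrization conjecture, which restricts $M$ to a connected sum of spherical space forms and copies of $S^2\times S^1$, with a direct Klein-Maskit type analysis of the admissible flat conformal structures on these building blocks.
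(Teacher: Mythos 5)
There is a genuine gap, and it sits exactly where you place it: your third step. Two problems. First, you attach the mass to the wrong manifold. The Schoen--Yau mechanism (\cite[Proposition~4.2]{SY}, quoted as Proposition~\ref{Prop-2} above) relates injectivity of the developing map to the mass of the asymptotically flat end built from the \emph{minimal} Green's function $\widetilde{G}$ on the universal cover $\widetilde{M}$, i.e.\ of $(\widetilde{M}-\{\widetilde{p}\}, \widetilde{G}^4\widetilde{g})$, not to the mass of $(M\setminus\{p_0\}, G^4 g)$ built from the Green's function on the compact quotient. The lift of $G$ to $\widetilde{M}$ is the sum $\sum_{\gamma}\widetilde{G}\circ\gamma$ over the deck group, so it has infinitely many poles whether or not the developing map is injective; an extra pole at $\widetilde{p}_2$ therefore cannot, by the sketched Kelvin-inversion comparison, force $m_{\mathrm{ADM}}(\hat g)\leq 0$ --- indeed the classical rigidity you invoke gives $m_{\mathrm{ADM}}(\hat g)>0$ for every $(M,C)\neq (S^3,C_0)$ with $Y(M,C)>0$, Kleinian or not, so the inequality you aim for is false in general and your contradiction scheme collapses. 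Second, once the comparison is run where it belongs (on $\widetilde{M}$), the crux becomes proving $\frak{m}_{ADM}(\widetilde{g}_{AF})\geq 0$ for an asymptotically flat $3$-manifold carrying \emph{infinitely many} additional complete, non-asymptotically-flat ends; this is precisely the ``extended positive mass theorem'' whose absence you note, and neither of your proposed fixes closes it: inverse mean curvature flow is not known to handle such ends, and option (ii) (``Klein--Maskit type analysis of admissible flat conformal structures'' on connected sums of space forms and $S^1\times S^2$'s) is essentially a restatement of the theorem to be proved --- recall that these manifolds do carry non-Kleinian flat conformal structures in general \cite[Remark~7.4]{GLT}, so positivity of the Yamabe constant must enter analytically, not just topologically.

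For contrast, the paper fills this gap without any extended PMT for singular ends: it keeps Proposition~\ref{Prop-2} as the reduction, and obtains $\frak{m}_{ADM}(\widetilde{g}_{AF})\geq 0$ from the covering-space positive mass theorem of \cite{Ak} (Proposition~\ref{Prop-1}), which applies to normal infinite coverings whose base fundamental group admits a descending chain of finite index subgroups tending to $\pi_1(M_\infty)$. That hypothesis is verified purely from $Y(M,C)>0$: the Gromov--Lawson classification \cite{GL} plus Izeki's prime decomposition for flat conformal structures \cite{Iz-1, Iz-2} and Kuiper's theorem \cite{Ku} give a finite cover $M'=\#\ell(S^1\times S^2)$, whose free fundamental group has such a chain tending to $\{e\}$; applying Proposition~\ref{Prop-1} to $\widetilde{M}\rightarrow M'$ yields the needed nonnegativity. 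If you want to salvage your outline, you must replace your step~3 by an argument of this type (or an actual proof of the extended PMT), since the classical Schoen--Yau--Witten theorems simply do not reach the manifold on which the mass must be estimated.
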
 

This assertion can be obtained by an argument in the proof of \cite[the second assertion of Theorem~1.4]{Ak}, 
which is a combination of a result \cite[Proposition~4.2]{SY}, 
a positive mass theorem \cite[the first assertion of Theorem~1.4]{Ak} 
(different form the one Schoen and Yau mentioned in \cite{SY}) and 
a classification of $3$-manifolds with positive scalar curvature \cite{GL, Iz-1, Iz-2}. 
Here, we will explicitly give a proof of it. 

The remaining sections are organized as follows. 
Section~2 contains some necessary definitions and preliminary geometric results. 
Section~3 is devoted to the proof of Theorem~\ref{Main}. 

\noindent
{\bf Acknowledgements.} 
The second author would like to thank Hiroyasu Izeki and Gilles Carron 
for helpful discussions and for useful comments respectively.

\section{Preliminaries} 
Let $M$ be a closed $3$-manifold, that is, a compact $3$-manifold without boundary. 
To simplify the presentation and the argument, we always assume that dim~$M = 3$ throughout this paper. 
For each conformal class $C$ on $M$, the {\it Yamabe constant} $Y(M, C)$ of $(M, C)$ 
is defined by 
$$ 
Y(M, C) := \inf_{g \in C}E(g),\qquad E(g) := \frac{\int_M R_g d\mu_g}{{\rm Vol}_g(M)^{1/3}}, 
$$
where $R_g, \mu_g$ and ${\rm Vol}_g(M)$ denote respectively the scalar curvature, the volume element of $g$ 
and the volume of $(M, g)$. It is a finite-valued conformal invariant of $C$. 
The Yamabe constant $Y(M, C)$ is positive if and only if 
there exists a positive scalar curvature metric $g \in C$ (cf.~\cite{Au-Book}). 
A remarkable theorem \cite{Yamabe, Tr, Au, Sc-1, SY} 
of Yamabe, Trudinger, Aubin and Schoen asserts that each conformal class $C$ contains 
a minimizer $\check{g}$ of $E|_C$, called a {\it Yamabe metric} 
(or a {\it solution of the Yamabe Problem}), 
which is of constant scalar curvature 
$$ 
R_{\check{g}} = Y(M, C)\cdot {\rm Vol}_{\check{g}}(M)^{-2/3}. 
$$ 

Let $M_{\infty}$ is an infinite covering of $M$. 
We shall call that the fundamental group $\pi_1(M)$ of $M$ has 
a {\it descending chain of finite index subgroups tending to} $\pi_1(M_{\infty})$ 
if it satisfies the following: 
There exists a family of subgroups $\{\Gamma_i\}_{i \geq 1}$ of $\pi_1(M)$ such that 
\begin{enumerate}  
\item[(i)] each $\Gamma_i$ is finite index in $\pi_1(M)$ with $\Gamma_i \supset \pi_1(M_{\infty})$,  
\item[(ii)] $\pi_1(M) = \Gamma_1 \supsetneq \Gamma_2 \supsetneq \cdots\ \supsetneq \Gamma_i \supsetneq \Gamma_{i + 1} \supsetneq \cdots\ ,$ 
\item[(iii)] $\bigcap_{i=1}^{\infty}\Gamma_i = \pi_1(M_{\infty}).$ 
\end{enumerate} 
Assume that $Y(M, C) > 0$. Take a positive scalar curvature metric $g \in C$ and any point $p \in M$. 
Then, there exists the {\it normalized Green's function} $G_p$ for $L_g$ with a pole at $p$, that is, 
$$ 
L_g G_p = c_0\cdot \delta_p\quad {\rm on}\ \ M\quad {\rm on}\quad \lim_{q \to p}{\rm dist}(q, p) G_p(q) = 1. 
$$ 
Here, $L_g := -8 \Delta_g + R_g, c_0 > 0$ and $\delta_p$ stand respectively for the {\it conformal Laplacian}, 
a specific universal positive constant and the Dirac $\delta$-function at $p$. 
Assume also that the covering $P_{\infty} : M_{\infty} \rightarrow M$ is normal. 
Let $g_{\infty}$ denote the lift of $g$ to $M_{\infty}$, 
and $p_{\infty}$ a point in $M_{\infty}$ with $P_{\infty}(p_{\infty}) = p$. 
Then, there exists uniquely also a {\it normalized minimal positive Green's function} $G_{\infty}$ on $M_{\infty}$ 
for $L_{g_{\infty}} := -8 \Delta_{g_{\infty}} + R_{g_{\infty}}$ with pole at $p_{\infty}$  (cf.~\cite{SY}), 
which satisfies the following 
$$ 
(P_{\infty})^{\ast} G_p = \sum_{\gamma \in \mathcal{G}}G_{\infty}\circ \gamma\quad {\rm on}\ \ M_{\infty}. 
$$ 
Here, $\mathcal{G}$ stands for the group of deck transformations 
for the normal covering $M_{\infty} \rightarrow M$. 
Set 
$$ 
g_{\infty, AF} := G_{\infty}^4\cdot g_{\infty}\quad {\rm on}\ \ M_{\infty}^{\ast} := M_{\infty} - \{p_{\infty}\}. 
$$ 
Then, $g_{\infty, AF}$ defines a scalar-flat, asymptotically flat metric on $M_{\infty}^{\ast}$ (cf.~\cite{LP}). 
Note that this asymptotically flat $3$-manifold $(M_{\infty}^{\ast}, g_{\infty, AF})$ has 
{\it infinitely many} singularities created by the ends of $M_{\infty}^{\ast}$. 
However, the mass $\frak{m}_{ADM}(g_{\infty, AF})$ of $(M_{\infty}^{\ast}, g_{\infty, AF})$ 
can be defined in the usual way (cf.~\cite{B}). 
Note also that the positive mass theorem for asymptotically flat $3$-manifolds with singularities 
does not always hold (see \cite[Remark~1.5-(2)]{Ak} for instance). 

With these understanding, the following positive mass theorem holds 
as a special case of \cite[the first assertion of Theorem~1.4]{Ak}: 

\begin{prop}\label{Prop-1} 
Let $(M, C)$ be a closed $3$-manifold with $Y(M, C) > 0$. 
Let $(M_{\infty}, g_{\infty})$ be a normal infinite Riemannian covering of $(M, g)$ 
such that $\pi_1(M)$ has a descending chain of finite index subgroups tending to $\pi_1(M_{\infty})$, 
where $g \in C$ is a positive scalar curvature metric and $g_{\infty}$ is its lift to $M_{\infty}$. 
For any point $p_{\infty} \in M_{\infty}$, 
let $G_{\infty}$ denote the normalized minimal positive Green's function on $M_{\infty}^{\ast}$ with pole at $p_{\infty}$. 
Then, the asymptotically flat $3$-manifold $(M_{\infty}^{\ast}, g_{\infty, AF})$ has nonnegative mass 
$$ 
\frak{m}_{ADM}(g_{\infty, AF}) \geq 0. 
$$ 
\end{prop}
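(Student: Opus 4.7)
The plan is to approximate $(M_\infty, g_\infty)$ by the tower of finite Riemannian coverings determined by the descending chain $\{\Gamma_i\}$, reduce to the classical Schoen--Yau positive mass theorem on each (closed) finite cover, and then pass to the limit. The descending chain condition is tailor-made for this kind of exhaustion, so it is the natural hypothesis on which to build the argument.

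Concretely, for each $i$ let $P_i : M_i \to M$ denote the connected cover with $\pi_1(M_i) = \Gamma_i$; since $[\pi_1(M):\Gamma_i] < \infty$, $M_i$ is a closed Riemannian $3$-manifold with the lifted metric $g_i := P_i^\ast g$. Fix a lift $p_i \in M_i$ of $p$. Because $g_i$ has positive scalar curvature, $Y(M_i, [g_i]) > 0$ and the conformal Laplacian $L_{g_i}$ admits a normalized Green's function $G_i$ at $p_i$; setting $g_{i, AF} := G_i^4\, g_i$ on $M_i^\ast := M_i \setminus \{p_i\}$ produces a smooth, scalar-flat, asymptotically flat $3$-manifold with a single end. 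The classical positive mass theorem in dimension three then yields
\[
\frak{m}_{ADM}(g_{i, AF}) \geq 0 \qquad \text{for every } i.
\]

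The next step is to compare the Green's functions across the tower. Since $\pi_1(M_\infty) \subset \Gamma_i$, the projection $P_\infty$ factors as $P_i \circ P_{\infty, i}$ with $P_{\infty, i} : M_\infty \to M_i$. Repeating the summation argument that produces $(P_\infty)^\ast G_p = \sum_{\gamma \in \mathcal{G}} G_\infty \circ \gamma$, one gets
\[
P_{\infty, i}^\ast G_i = \sum_{\gamma \in \mathcal{G}_i} G_\infty \circ \gamma \qquad \text{on } M_\infty,
\]
where $\mathcal{G}_i$ is a set of coset representatives for $\Gamma_i/\pi_1(M_\infty)$ inside $\mathcal{G}$. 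Because the $\Gamma_i$ descend, the sets $\mathcal{G}_i$ shrink monotonically, and by (iii) they intersect in $\{\mathrm{id}\}$; combined with the minimality of $G_\infty$, this identifies $P_{\infty, i}^\ast G_i \searrow G_\infty$ locally uniformly on $M_\infty^\ast$.

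The main obstacle is to upgrade this convergence of Green's functions to convergence of the asymptotic constants that carry the mass. Writing
\[
G_i(q) = \dist_{g_i}(q, p_i)^{-1} + A_i + o(1), \qquad G_\infty(q) = \dist_{g_\infty}(q, p_\infty)^{-1} + A_\infty + o(1)
\]
in $g_i$-- and $g_\infty$--normal coordinates at the respective poles, the ADM mass of $g_{i, AF}$ (resp.\ $g_{\infty, AF}$) is a positive universal multiple of $A_i$ (resp.\ $A_\infty$). Hence one needs $A_i \to A_\infty$, equivalently
\[
\sum_{\gamma \in \mathcal{G}_i \setminus \{\mathrm{id}\}} G_\infty(\gamma \cdot q) \; \longrightarrow \; 0 \qquad \text{uniformly for } q \text{ in a fixed geodesic ball about } p_\infty.
\]
This is where I expect the real difficulty: (iii) only guarantees that any individual orbit point $\gamma(p_\infty)$ with $\gamma \neq \mathrm{id}$ eventually escapes each compact set, but controlling the entire tail of the sum demands uniform Harnack/decay estimates for the minimal positive Green's function $G_\infty$ along deck-transformation orbits (for instance, bounds of the form $G_\infty(q) \leq C/\dist_{g_\infty}(q, p_\infty)$ on suitably large regions and summability over $\mathcal{G}$). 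Once such a uniform tail estimate is established, $\frak{m}_{ADM}(g_{\infty, AF}) = \lim_i \frak{m}_{ADM}(g_{i, AF}) \geq 0$ follows immediately.
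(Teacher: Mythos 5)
Your overall strategy is sound, and it is essentially the intended one: in this paper Proposition~\ref{Prop-1} is not proved but quoted as a special case of the first assertion of Theorem~1.4 of \cite{Ak}, and the descending-chain hypothesis exists precisely to run the finite-cover approximation you describe (classical Schoen--Yau positive mass theorem on the conformal blow-up of each closed cover $M_i$ with $\pi_1(M_i)=\Gamma_i$, then a limit of the mass along the tower). Two small corrections to your set-up: since $\pi_1(M_\infty)$ is normal in $\pi_1(M)$, each covering $M_\infty\to M_i$ is itself normal and $\mathcal{G}_i=\Gamma_i/\pi_1(M_\infty)$ is its full deck group (a subgroup of $\mathcal{G}$), not merely a set of coset representatives; and in dimension $3$ you should say why the ADM mass is a positive universal multiple of the constant term $A$ for an arbitrary smooth background (in normal coordinates $g=\delta+O(r^2)$, and after inversion these terms contribute nothing to the ADM limit), together with the standard change-of-mass formula under the conformal factor $u_i=(G_\infty+h_i)/G_\infty=1+h_i(p_\infty)|y|^{-1}+o(|y|^{-1})$, where $h_i:=P_{\infty,i}^{\ast}G_i-G_\infty=\sum_{\gamma\in\mathcal{G}_i\setminus\{\mathrm{id}\}}G_\infty\circ\gamma$; this yields $\mathfrak{m}_{ADM}(g_{i,AF})-\mathfrak{m}_{ADM}(g_{\infty,AF})=c\,h_i(p_\infty)$ with $c>0$, so only the value of the tail at the single point $p_\infty$ matters.

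The step you leave open --- the vanishing of that tail --- is the one genuine gap in your write-up, but it does not require the uniform Harnack/decay estimates along orbits that you anticipate; it follows from the summation formula already quoted in Section~2 plus condition (iii). Indeed, $(P_\infty)^{\ast}G_p=\sum_{\gamma\in\mathcal{G}}G_\infty\circ\gamma$ converges off the orbit of $p_\infty$, so $h_1=\sum_{\gamma\in\mathcal{G}\setminus\{\mathrm{id}\}}G_\infty\circ\gamma$ is an increasing limit of nonnegative $L_{g_\infty}$-solutions that are smooth on a ball about $p_\infty$ and locally bounded there; Harnack and elliptic estimates give a smooth finite limit, so
$$
\sum_{\gamma\in\mathcal{G}\setminus\{\mathrm{id}\}}G_\infty(\gamma\, p_\infty)<\infty .
$$
Since $\bigcap_i\Gamma_i=\pi_1(M_\infty)$, every fixed $\gamma\neq\mathrm{id}$ lies outside $\mathcal{G}_i$ for all large $i$, and dominated convergence for series (the full convergent series dominating the tails) gives $h_i(p_\infty)=\sum_{\gamma\in\mathcal{G}_i\setminus\{\mathrm{id}\}}G_\infty(\gamma\, p_\infty)\to 0$; no uniformity on a ball is needed. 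With this, $\mathfrak{m}_{ADM}(g_{\infty,AF})=\mathfrak{m}_{ADM}(g_{i,AF})-c\,h_i(p_\infty)\geq -c\,h_i(p_\infty)\to 0$, closing the argument. So your proposal is correct in outline and matches the strategy behind the cited result, but as written it stops short of the proof at a point that can in fact be completed with the tools already stated in the paper.
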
 

\begin{rmk}\label{spin} 
Assume that $M = \#\ell (S^1 \times S^2)$ for $\ell \geq 2$ 
and $M_{\infty}$ is its universal covering. 
Note that $M_{\infty}$ is spin. 
For each small $\sigma > 0$, 
consider the complete metric $g_{\sigma, AF} := (G_{\infty} + \sigma)^4\cdot g_{\infty}$ 
with $R_{g_{\sigma, AF}} \geq 0$ on $M_{\infty}^*$ 
(cf.~\cite[Proposition~4.4$'$]{SY}).  
Then, only one end of $(M_{\infty}^*, g_{\sigma, AF})$ is asymptotically flat 
and the other infinitely many ends are merely complete. 
For the authors, it is not clear whether Witten's approach \cite{Witten} (cf.~\cite{PT}) 
to Positive Mass Theorem is still valid for $(M_{\infty}^*, g_{\sigma, AF})$. 
Hence, we will use here Proposition~\ref{Prop-1} for the proof. 
\end{rmk} 

A conformal $3$-manifold $(M, C)$ is is said to be {\it locally conformally flat} 
if, for any point $p \in M$, there exists a metric $\overline{g} \in C$ 
such that $\overline{g}$ is flat on some neighborhood of $p$. 
A conformal class $C$ on $M$ is called a {\it flat conformal structure} if 
$(M, C)$ is locally conformally flat. 
In \cite{Ku}, Kuiper proved that, for a simply connected locally conformally flat $3$-manifold $(X, C')$, 
there is a conformal immersion into $(S^3, C_0)$ called {\it developing map}, 
which is unique up to composition with a M\"obius transformation of $(S^3, C_0)$. 
Therefore, the universal covering of a locally conformally flat manifold $(M, C)$ admits a developing map. 
Here, $(S^3, C_0)$ denotes the $3$-sphere $S^3$ 
with the conformal class $C_0 := [g_0]$ of the standard metric $g_0$ 
of constant curvature one. 
$(M, C)$ is called {\it Kleinian} if $(M, C)$ is conformal to $\Omega/\Gamma$ 
for some open set $\Omega$ of $S^3$ and some discrete subgroup $\Gamma$ 
of the conformal transformation group ${\rm Conf}(S^3, C_0)$, 
which leaves $\Omega$ invariant and acts freely and properly discontinuously on $\Omega$. 
Note that, if the developing map of the universal covering of a locally conformally flat manifold 
$(M, C)$ is injective, then $(M, C)$ is Kleinian. 

With these understanding, the following criterion also holds 
as a special case of \cite[Proposition~4.2]{SY}: 

\begin{prop}\label{Prop-2} 
Let $(M, C)$ be a closed $3$-manifold with $Y(M, C) > 0$,  
and $(\widetilde{M}, \widetilde{g})$ the universal Riemannian covering of $(M, g)$, 
where $g \in C$ is a positive scalar curvature metric. 
For any point $\widetilde{p} \in \widetilde{M}$, 
let $\widetilde{G}$ denote the normalized minimal positive Green's function on $\widetilde{M}$ 
for $L_{\widetilde{g}}$ with pole at $\widetilde{p}$, 
and $(\widetilde{M} - \{\widetilde{p}\}, \widetilde{g}_{AF} = \widetilde{G}^4\cdot \widetilde{g})$ 
the asymptotically flat $3$-manifold as above. 
If the mass $\frak{m}_{ADM}(\widetilde{g}_{AF})$ is nonnegative, 
then the developing map of $(\widetilde{M}, [\widetilde{g}])$ is injective. 
In particular, $(M, C)$ is Kleinian. 
\end{prop}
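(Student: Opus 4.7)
The plan is to prove the contrapositive: assuming the developing map $\Phi\colon \widetilde{M}\to S^3$ fails to be injective, I would derive $\mathfrak{m}_{ADM}(\widetilde{g}_{AF})<0$, contradicting the hypothesis. Fix witnesses $\widetilde{p}\ne \widetilde{q}$ in $\widetilde{M}$ with $\Phi(\widetilde{p})=\Phi(\widetilde{q})=:N\in S^3$, and form $\widetilde{G}$, $\widetilde{g}_{AF}$ with respect to this $\widetilde{p}$. Let $\sigma\colon S^3\setminus\{N\}\to \R^3$ be stereographic projection from $N$, and set $\Psi:=\sigma\circ\Phi\colon \widetilde{M}\setminus \Phi^{-1}(N)\to \R^3$, a conformal local diffeomorphism into the flat Euclidean space $(\R^3,g_E)$.

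First I would express $\widetilde{g}_{AF}$ in this Euclidean model. Writing $\Psi^{\ast}g_E=w^4\widetilde{g}$ and setting $\widetilde{H}:=\widetilde{G}/w$, the conformal covariance of the conformal Laplacian together with $L_{\widetilde{g}}\widetilde{G}=0$ off $\widetilde{p}$ yields $\Delta_{g_E}\widetilde{H}=0$ in the Euclidean charts produced by $\Psi$, while $\widetilde{g}_{AF}=\widetilde{H}^4\,\Psi^{\ast}g_E$ appears in these coordinates as the standard scalar-flat conformally flat metric $\widetilde{H}^4 g_E$. Using the normalization $\dist_{\widetilde{g}}(q,\widetilde{p})\,\widetilde{G}(q)\to 1$ together with the explicit stereographic expansion of $w$ near $\widetilde{p}$, I would check that $\widetilde{H}$ tends to a positive constant along the $\widetilde{p}$-end ($|x|\to \infty$); post-composing $\Phi$ with a suitable M\"obius homothety normalizes this constant to $1$, so
$$
\widetilde{H}(x)=1+\frac{A}{|x|}+O(|x|^{-2})
$$
at the $\widetilde{p}$-end, and the standard ADM formula for a conformally flat end gives $\mathfrak{m}_{ADM}(\widetilde{g}_{AF})=\kappa\cdot A$ for a positive universal constant $\kappa$.

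Next I would force $A<0$ using the second preimage $\widetilde{q}$, which provides a second ``infinity'' in the Euclidean chart: as $q\to\widetilde{q}$, $\Psi(q)\to\infty$ along a different sheet, the conformal factor $w(q)\to\infty$ (because $\Psi^{\ast}g_E$ degenerates at every preimage of $N$), and $\widetilde{G}(q)\to\widetilde{G}(\widetilde{q})>0$ stays finite; so $\widetilde{H}\to 0$ there. Matching smoothness of $\widetilde{g}_{AF}$ at $\widetilde{q}$ against $\Psi^{\ast}g_E\sim |y|^{-4}g_E$ in local coordinates $y$ at $\widetilde{q}$ refines this to $\widetilde{H}=O(|y|)$, that is, $\widetilde{H}(x)=O(1/|x|)$ with positive leading coefficient at the $\widetilde{q}$-infinity. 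Hence $\widetilde{H}$ is a positive Euclidean-harmonic function on the developing image that tends to $1$ at the $\widetilde{p}$-end and decays like $1/|x|$ at the $\widetilde{q}$-end. A capacity / exterior Dirichlet comparison (equivalently, a Newton-potential representation adapted to the sheet structure) then shows that the complementary function $u:=1-\widetilde{H}\ge 0$ has expansion $u(x)=C/|x|+O(|x|^{-2})$ at the $\widetilde{p}$-end with $C>0$, yielding $A=-C<0$.

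The main obstacle is that last capacity / potential-theoretic step: the developing image is a potentially multi-sheeted, possibly incomplete region in $\R^3$, so neither a global Perron/Poisson representation nor a direct maximum principle is available off the shelf. A robust strategy is to lift $\widetilde{H}$ to the universal cover of a tubular region joining the two ends and run the argument there, or alternatively to write $\widetilde{H}$ as a Newton-type potential whose ``$\widetilde{q}$-source'' contributes with the wrong sign to the $1/|x|$ coefficient at the $\widetilde{p}$-end. Making either version rigorous across the branched LCF structure is where the substantive work lies.
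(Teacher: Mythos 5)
There is a genuine gap, and you have named it yourself: the ``capacity / exterior Dirichlet comparison'' step is not a technical loose end but the entire content of the proposition, and your proposal leaves it open. (For calibration: the paper offers no proof of its own here --- Proposition~\ref{Prop-2} is quoted as a special case of \cite[Proposition~4.2]{SY}, so the relevant comparison is with Schoen--Yau's argument.) Two concrete problems with your sketch. First, as you observe, the developing image is a multi-sheeted, typically incomplete subset of $\R^3$, so there is no ambient domain on which to run a Perron method or a Newton-potential representation; neither of your proposed repairs (lifting to a cover of a tube joining the ends, or a ``sheet-adapted'' potential) is an off-the-shelf tool, and carrying one out would amount to reproving \cite[Proposition~4.2]{SY} the hard way. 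Second, and more decisively: nowhere in your argument do you use that $\widetilde{G}$ is the \emph{minimal} positive Green's function, yet the statement is false without minimality. On $\widetilde{M}$ there may exist bounded positive $L_{\widetilde{g}}$-harmonic functions; adding one to $\widetilde{G}$ produces another normalized Green's function with a strictly larger constant term $A$, while leaving unchanged every piece of local data at $\widetilde{q}$ that your argument consumes ($w\to\infty$, $\widetilde{G}$ bounded, $\widetilde{H}=O(1/|x|)$). So any comparison argument that is blind to minimality cannot conclude $A<0$; in particular the inequality $\widetilde{H}\le 1$, which any such comparison would need as a barrier, is exactly where minimality must enter.

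The way Schoen--Yau close this is worth internalizing, because your setup is secretly theirs: your conformal factor $w$, defined by $\Psi^{\ast}g_E=w^4\widetilde{g}$, \emph{is} the pullback $\widehat{G}:=u\cdot(G_{S^3,N}\circ\Phi)$ of the normalized spherical Green's function with pole at $N$, where $\Phi^{\ast}g_0=u^4\widetilde{g}$. The decisive move is to run the maximum principle on $\widetilde{M}$, not in the branched Euclidean image. Exhaust $\widetilde{M}$ by compact domains $\Omega_i\ni\widetilde{p}$; the Green's functions $G_i$ with zero boundary data satisfy $G_i\le w$ by the maximum principle on $\Omega_i$, since $w$ is $L_{\widetilde{g}}$-harmonic off $\Phi^{-1}(N)$, has positive poles on that set, matches the singularity of $G_i$ at $\widetilde{p}$, and is positive on $\partial\Omega_i$. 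Letting $i\to\infty$, minimality gives $\widetilde{G}\le w$, i.e.\ your $\widetilde{H}\le 1$ globally. Then $h:=w-\widetilde{G}\ge 0$ is $L_{\widetilde{g}}$-harmonic on $\widetilde{M}\setminus\Phi^{-1}(N)$, extends harmonically across $\widetilde{p}$ (the leading singularities cancel after your M\"obius normalization), and blows up at the second preimage $\widetilde{q}$; the strong maximum principle forces $h(\widetilde{p})>0$. Since $\Psi$ is an exact conformal chart near $\widetilde{p}$ onto a neighborhood of infinity in flat $\R^3$, the expansion of $w$ at $\widetilde{p}$ has vanishing constant term, and with $w\sim|x|$ along the $\widetilde{p}$-end one computes
$$
h = w\bigl(1-\widetilde{H}\bigr) = -A + O\bigl(|x|^{-1}\bigr),
$$
so $A=-h(\widetilde{p})<0$ and $\frak{m}_{ADM}(\widetilde{g}_{AF})<0$, completing the contrapositive. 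In short: replace your missing potential-theoretic step by (i) transplanting the flat-space comparison function to $\widetilde{M}$ via the developing map and (ii) invoking minimality of $\widetilde{G}$ to obtain the one-sided bound --- this is precisely \cite[Proposition~4.2]{SY}, and without these two ingredients your outline does not close.
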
 

\begin{rmk} 
We remark that the mass $\frak{m}_{ADM}(\widetilde{g}_{AF})$ is equal to 
the ADM energy $E$ of $(\widetilde{M} - \{\widetilde{p}\}, \widetilde{g}_{AF})$ 
appeared in \cite[page~64]{SY} up to a positive constant. 
\end{rmk}

\section{Proof of Main Theorem} 
\begin{proof}[Proof of Theorem~\ref{Main}] 
Consider the universal covering $\widetilde{M}$ of $M$ 
and denote the lift of the flat conformal structure $C$ by $\widetilde{C}$. 
If $|\pi_1(M)| < \infty$, 
then $(\widetilde{M}, \widetilde{C})$ is conformal to $(S^3, C_0)$ 
by Kuiper's Theorem \cite{Ku}. 
Hence, $(M, C)$ is Kleinian. 
From now on, we assume that $|\pi_1(M)| = \infty$, that is, 
the degree of the covering map $P : \widetilde{M} \rightarrow M$ is infinite. 

Take a unit-volume Yamabe metric $g \in C$, 
and consider its lift $\widetilde{g} \in \widetilde{C}$ to $\widetilde{M}$. 
Note that $R_{\widetilde{g}} = R_g = Y(M, C) > 0$. 
Take any base points $p \in M, \widetilde{p} \in \widetilde{M}$ satisfying 
$P(\widetilde{p}) = p$, and fix them. 
Then, let $\widetilde{G}$ denote the normalized minimal positive Green function on $\widetilde{M}$ 
for $L_{\widetilde{g}}$ with pole at $\widetilde{p}$, 
and the mass $\frak{m}_{\rm ADM}(\widetilde{g}_{AF})$ of the asymptotically flat $3$-manifold 
$(\widetilde{M} - \{\widetilde{p}\}, \widetilde{g}_{AF} := \widetilde{G}^4\cdot \widetilde{g})$. 

Suppose that 
$$ 
\frak{m}_{\rm ADM}(\widetilde{g}_{AF}) \geq 0.  
$$ 
Recall that we can choose the base point $\widetilde{p} \in \widetilde{M}$ arbitrarily. 
It then follows from Proposition~\ref{Prop-2} that 
the developing map of $(\widetilde{M}, \widetilde{C})$ is injective, 
and hence $(M, C)$ is Kleinian.  
In this case, especially $\frak{m}_{\rm ADM}(\widetilde{g}_{AF}) = 0$. 
Therefore, it is enough to show $\frak{m}_{\rm ADM}(\widetilde{g}_{AF}) \geq 0$. 

By combining \cite[Theorem~8.1]{GL} (cf.~\cite{Hem}) with $Y(M, C) > 0$, 
(replacing $M$ by its orientable double covering if necessary) 
$M$ can be decomposed uniquely into {\it prime} closed $3$-manifolds 
$$ 
M = N_1 \# \cdots \# N_{\ell_1} \# \ell_2(S^1 \times S^2), 
$$ 
where $\pi_1(N_i)$ is finite for $i = 1, \cdots, \ell_1$ 
and $\ell_1, \ell_2$ are nonnegative integers. 
By applying the $C$-prime decomposition theorem 
for closed $3$-manifolds with flat conformal structures \cite{Iz-1, Iz-2} to $(M, C)$, 
there exists a flat conformal structure $C_i$ on each $N_i~(i = 1, \cdots, \ell_1)$. 
Then, Kuiper's Theorem~\cite{Ku} again implies that 
each $(N_i, C_i)$ is a non-trivial quotient of $(S^3, C_0)$. 
After taking an appropriate finite covering $M'$ of $M$, 
we have 
$$ 
M' = \# \ell(S^1 \times S^2)\quad \textrm{for some}\ \ \ell \geq 1. 
$$ 
Recall that $\widetilde{M}$ is the infinite universal covering of $M$. 
Then, there exists (uniquely) an infinite universal covering $\widetilde{M} \rightarrow M'$. 
Moreover, since $\pi_1(M')$ is a finitely generated free group, 
it has a descending chain of finite index subgroups 
tending to $\pi_1(\widetilde{M}) = \{e\}$. 
Let $g'$ be the lifting of $g$ to $M'$. 
Applying Proposition~\ref{Prop-1} to 
the normal infinite Riemannian covering $(\widetilde{M}, \widetilde{g}) \rightarrow (M', g')$, 
we have that 
$$ 
\frak{m}_{\rm ADM}(\widetilde{g}_{AF}) \geq 0. 
$$ 
This completes the proof of Theorem~\ref{Main}. 
\end{proof}  

\begin{rmk} 
Even if we replace the positivity $Y(M, C) > 0$ in Theorem~\ref{Main} 
by the nonnegativity $Y(M, C) \geq 0$, 
it seems that the same conclusion still holds. 
More precisely, we propose the following (cf.~\cite{Bou, Ko}).

{\bf Conjecture.}\ \ 
{\it Let $M$ be a closed $3$-manifold with flat conformal structure $C$. 
If its Yamabe constant is zero, 
then either of the following $(1)$ or $(2)$ holds$:$\\ 
$(1)$\ \ There exists a flat metric $\overline{g} \in C$. \\ 
$(2)$\ \ There exists a smooth family $\{g_t\}_{0 \leq t \leq 1}$ of locally conformally flat metrics on $M$ 
such that $g_0 \in C$ and $Y(M, [g_1]) > 0$.  
} 

In the case $(1)$, the universal covering $(\widetilde{M}, \widetilde{C})$ of $(M, C)$ 
is conformal to $(S^3 - \{p_N\}, C_0)$ where $p_N := (1, 0, 0, 0) \in S^3$, 
and hence $(M, C)$ is Kleinian. 
In the case $(2)$, Theorem~\ref{Main} implies that $(M, [g_1])$ is Kleinian. 
The argument in Proof of Theorem~\ref{Main} also implies that there exists a torsion free subgroup $\Gamma$ of finite index in $\pi_1(M)$ 
such that $\Gamma$ is either a trivial group or a non-trivial finitely generated free group. 
Then, the {\it virtual cohomological dimension} ${\rm vcd}~\pi_1(M)$ of $\pi_1(M)$ is either $0$ or $1$ (see \cite{Br-Book}). 
Therefore, $(M, [g_1])$ is a closed Kleinian $3$-manifold with ${\rm vcd}~\pi_1(M) < 3$. 
The quasiconformal stability of Kleinian groups \cite[Theorem~2]{Iz-3} implies that 
any flat conformal structure on $M$ 
which is a smooth deformation of $[g_1]$ is also Kleinian, particularly $C$ is too.  
\end{rmk}

\bibliographystyle{amsbook}

\vspace{10mm} 

\end{document}